%%%%%%%%%%%%%%%%%%%%%%%%%%%%%%%%%%%%%%%%%%%%%%%%%%%%%%%%%%%%%%
\documentclass[12pt]{amsart}

\setlength{\textheight}{23cm}
\setlength{\textwidth}{16cm}
\setlength{\topmargin}{-0.8cm}
\setlength{\parskip}{0.3\baselineskip}
\hoffset=-1.4cm

\usepackage{amssymb}

\newtheorem{theorem}{Theorem}[section]
\newtheorem{lemma}[theorem]{Lemma}
\newtheorem{proposition}[theorem]{Proposition}

\begin{document}
\baselineskip=15.5pt

\title[Semistability of invariant bundles over
$G/\Gamma$]{Semistability of invariant bundles over $G/\Gamma$}

\author[I. Biswas]{Indranil Biswas}

\address{School of Mathematics, Tata Institute of Fundamental
Research, Homi Bhabha Road, Bombay 400005, India}

\email{indranil@math.tifr.res.in}

\subjclass[2000]{32L05, 53C30}

\date{}

\begin{abstract}
Let $G$ be a connected reductive affine algebraic group defined over
$\mathbb C$, and let $\Gamma$ be a cocompact lattice in $G$. We prove
that any invariant bundle on $G/\Gamma$ is semistable.
\smallskip

\noindent
\textsc{R\'esum\'e.} \textbf{Semi-stabilit\'e de
fibr\'es invariants sur $G/\Gamma$.}
Soit $\Gamma$ un sous-groupe discret cocompact d'un groupe 
alg\'ebique r\'eductif affine $G$. Nous d\'emontrons que tout 
fibr\'e invariant sur $G/\Gamma$ est semi-stable.
\end{abstract}

\maketitle

\section{Introduction}\label{sec1}

Let $G$ be a connected complex reductive affine algebraic group,
and let $K\, \subset\, G$ be a maximal compact subgroup. Fixing a
$K$--invariant Hermitian form on $\text{Lie}(G)$, we may extend
it to a right--translation invariant Hermitian structure on $G$.
If $\omega_G$ is the corresponding $(1\, ,1)$--form on $G$, and
$\dim_{\mathbb C} G\,=\, \delta$, we prove that the form
$\omega^{\delta-1}_G$ is closed (see Proposition \ref{prop1}).

Let $\Gamma\, \subset\, G$ be a cocompact lattice. The descent
of $\omega_G$ to the compact quotient $G/\Gamma$ will be denoted
by $\widetilde{\omega}$. So, $d\widetilde{\omega}^{\delta-1}
\,=\, 0$ by Proposition \ref{prop1}.
This allows us to define the degree of a coherent analytic 
sheaf on $G/\Gamma$; as a consequence, semistable vector bundles 
on $G/\Gamma$ can be defined.

A vector bundle $E$ on $G/\Gamma$ is called invariant if the
pullback of $E$ using the left--translation by any element of
$G$ is holomorphically isomorphic to $E$. We prove that
invariant vector bundles are semistable (see Lemma \ref{lem1}).
It may be mentioned that Lemma \ref{lem1} remains valid for
holomorphic principal bundles on $G/\Gamma$ with a reductive
group as the structure group.

\section{Hermitian structure and semistability}

Let $G$ be a connected reductive affine algebraic group defined over 
$\mathbb C$. The Lie algebra of $G$ will be denoted
by $\mathfrak g$. Fix a maximal compact subgroup $K\, \subset\, G$.

The group $G$ has the adjoint action on $\mathfrak g$. Let $h_0$ be
a $K$--invariant inner product on the complex vector space $\mathfrak
g$. Let $h_G$ be the unique Hermitian structure on $G$, invariant under 
the right translation action of $G$ on itself, with $h_G(e)\,=\, h_0$. 
Let $\omega_G$ be the $(1\, ,1)$--form on $G$ associated to $h_G$.

\begin{proposition}\label{prop1}
Let $\delta$ be the complex dimension of $G$. Then
$$
d\omega^{\delta-1}_G\, =\, 0\, ,
$$
where $\omega_G$ is defined above.
\end{proposition}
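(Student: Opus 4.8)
The plan is to rewrite the condition $d\omega_G^{\delta-1}=0$ as the vanishing of (a multiple of) the Lee form of the Hermitian metric $h_G$, and then to read that Lee form off the Maurer--Cartan equations of $G$; it will turn out to be controlled by the trace of the adjoint representation, so the proposition follows from the fact that a reductive group is unimodular. First dispose of the degenerate case: if $\mathfrak g$ is abelian then all structure constants of $G$ vanish, so every right--invariant $1$--form on $G$ is closed, hence so is every right--invariant form, in particular $\omega_G^{\delta-1}$. If $\mathfrak g$ is not abelian, its semisimple part is a nonzero direct sum of simple ideals and so has dimension at least $3$; thus $\delta=\dim_{\mathbb C}G\ge 3$, which I assume henceforth. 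For a Hermitian metric with fundamental form $\omega$ on a complex $n$--fold, let $L$ be exterior multiplication by $\omega$ and $\Lambda$ its adjoint. Writing the primitive decomposition of the $3$--form $d\omega$ as $d\omega=\gamma+\tfrac1{n-1}L(\Lambda d\omega)$ with $\gamma$ primitive, and using that $L^{n-2}\gamma=0$ for a primitive $3$--form, one obtains
$$d\omega^{n-1}\;=\;(n-1)\,\omega^{n-2}\wedge d\omega\;=\;(\Lambda d\omega)\wedge\omega^{n-1}.$$
Since $\eta\mapsto\eta\wedge\omega^{n-1}$ is an isomorphism from $1$--forms onto $(2n-1)$--forms, it is enough to prove $\Lambda(d\omega_G)=0$. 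As $\omega_G$ is right--invariant, so is $\Lambda(d\omega_G)$, and this $1$--form is therefore determined by its value at the identity, an element of $\mathfrak g^{*}$.

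To identify that element, let $X_1,\dots,X_\delta$ be an orthonormal basis of $(\mathfrak g,h_0)$; then $X_1,\dots,X_\delta,iX_1,\dots,iX_\delta$ is orthonormal for $g_0:=\operatorname{Re}h_0$, in the dual right--invariant coframe one has $\omega_G=\sum_a\theta^a\wedge\phi^a$, and consequently $(\Lambda\beta)(Z)=\sum_a\beta\big(X_a^{R},(iX_a)^{R},Z\big)$ for every $3$--form $\beta$, where $(\,\cdot\,)^{R}$ denotes the right--invariant vector field extending an element of $\mathfrak g$. For right--invariant vector fields the Maurer--Cartan identity reads
$$d\omega_G(A^{R},B^{R},C^{R})=-\omega_G([A^{R},B^{R}],C^{R})+\omega_G([A^{R},C^{R}],B^{R})-\omega_G([B^{R},C^{R}],A^{R})$$
(the terms $A^{R}\omega_G(B^{R},C^{R})$, etc., vanish by right--invariance), together with $[X^{R},Y^{R}]=-[X,Y]^{R}$. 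Applying this with $(A,B,C)=(X_a,iX_a,w)$, and using $[X_a,iX_a]=0$, the $\mathbb C$--linearity of $\operatorname{ad}(w)$, and $\omega_G(\,\cdot\,,\,\cdot\,)=g_0(J\,\cdot\,,\,\cdot\,)$ with $J$--invariance of $g_0$, each summand collapses to $-2\,g_0\big([X_a,w],X_a\big)=2\,g_0\big(\operatorname{ad}(w)X_a,X_a\big)$. Summing over $a$, and using that $\operatorname{ad}(w)$ is $\mathbb C$--linear so that each vector $iX_a$ of the full real orthonormal frame contributes the same as $X_a$, one gets
$$\big(\Lambda d\omega_G\big)(w^{R})\big|_{e}\;=\;c\,\operatorname{tr}_{\mathbb R}\!\big(\operatorname{ad}(w)\big)$$
for some universal nonzero constant $c$.

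To finish: $\mathfrak g$ is reductive, so $\mathfrak g=\mathfrak z(\mathfrak g)\oplus[\mathfrak g,\mathfrak g]$ with $[\mathfrak g,\mathfrak g]$ semisimple; $\operatorname{ad}$ kills $\mathfrak z(\mathfrak g)$, and for $w=[a,b]\in[\mathfrak g,\mathfrak g]$ one has $\operatorname{tr}\operatorname{ad}(w)=\operatorname{tr}[\operatorname{ad}a,\operatorname{ad}b]=0$, so $\operatorname{tr}_{\mathbb R}(\operatorname{ad}w)=0$ for all $w\in\mathfrak g$ (equivalently, $G$ is unimodular). Hence $\Lambda(d\omega_G)=0$, and by the first display $d\omega_G^{\delta-1}=0$. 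I expect the bookkeeping in the second paragraph to be the only real obstacle: fixing consistent sign and normalization conventions for $\Lambda$ (equivalently, for the Lee form), and verifying that the three Maurer--Cartan terms genuinely combine into a single multiple of $g_0(\operatorname{ad}(w)X_a,X_a)$ with no residual primitive contribution. The reductive hypothesis enters only at the very end, and only via unimodularity.
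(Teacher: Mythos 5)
Your proposal is correct, and it takes a genuinely different route from the paper. The paper first shows that the decomposition ${\mathfrak g}=[{\mathfrak g},{\mathfrak g}]\oplus{\mathfrak z}_{\mathfrak g}$ is $h_0$--orthogonal in order to reduce to the semisimple case, and then argues by pure invariant theory: since $h_0$ is $K$--invariant, the element $(d\omega_G^{\delta-1})(e)$ lies in the $K$--invariants of $\wedge^{2\delta-1}({\mathfrak g}\otimes_{\mathbb R}{\mathbb C})^*\cong{\mathfrak g}\otimes_{\mathbb R}{\mathbb C}$, which vanish because ${\mathfrak g}$ is semisimple and $K$ is Zariski dense; right--invariance then propagates the vanishing at $e$ to all of $G$. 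You instead never invoke the $K$--invariance of $h_0$: you reduce $d\omega_G^{\delta-1}=0$ to the vanishing of the Lee form via the pointwise Lefschetz identity $d\omega^{n-1}=(\Lambda d\omega)\wedge\omega^{n-1}$ and injectivity of $L^{n-1}$ on $1$--forms, compute $\Lambda d\omega_G$ at $e$ from the Maurer--Cartan formula for right--invariant fields (your collapse of the three terms to $-2g_0([X_a,w],X_a)$ and the identification of the sum with a nonzero multiple of $\operatorname{tr}\operatorname{ad}(w)$ both check out, and the convention-dependent constant is harmless since only its nonvanishing matters), and conclude from unimodularity of reductive groups. The trade-off: the paper's argument is soft, avoiding any Hermitian linear algebra, but genuinely needs the $K$--invariance hypothesis and the preliminary splitting; yours is a computation, but it proves the stronger statement that \emph{every} right--translation invariant Hermitian metric on \emph{any} unimodular complex Lie group is balanced, identifies the precise obstruction (the Lee form equals the modular character $w\mapsto\operatorname{tr}\operatorname{ad}(w)$ up to a nonzero constant), and makes the reduction to the semisimple case unnecessary. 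Your separate treatment of the abelian case and the remark $\delta\ge 3$ are not really needed (the Lefschetz identity already covers low dimensions), but they do no harm.
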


\begin{proof}
We will first reduce it to the case of semisimple groups. Let
$Z_G$ be the connected component of the center of $G$ containing
the identity element $e$. The Lie 
algebra of $Z_G$ will be denoted by ${\mathfrak z}_{\mathfrak g}$. 
The Lie algebra ${\mathfrak g}$ decomposes as
\begin{equation}\label{e1}
{\mathfrak g}\,=\, [{\mathfrak g}\, ,{\mathfrak g}]\oplus
{\mathfrak z}_{\mathfrak g}\, ;
\end{equation}
we note that $[{\mathfrak g}\, ,{\mathfrak g}]$ is semisimple. Using
$h_0$, any element $v\, \in\, {\mathfrak z}_{\mathfrak g}$ produces
an element $\widetilde{v}\, \in\, [{\mathfrak g}\, ,{\mathfrak g}]^*$
defined as follows:
$$
\widetilde{v}(y) \,=\, h_0(y\, ,v)
$$
for all $y\, \in\, [{\mathfrak g}\, ,{\mathfrak g}]$.

The action of $G$ on $\mathfrak g$ preserves the decomposition
in \eqref{e1}, and the action of $G$ on ${\mathfrak z}_{\mathfrak g}$
is trivial.
Since $h_0$ is $K$--invariant, these imply that $\widetilde{v}$ is
left invariant by the action of $K$ on $[{\mathfrak g}\, ,{\mathfrak 
g}]^*$. We note that $[{\mathfrak g}\, ,{\mathfrak
g}]^*$ is identified with $[{\mathfrak g}\, ,{\mathfrak g}]$ using the
Killing form. There is no nonzero element of
$[{\mathfrak g}\, ,{\mathfrak g}]$ fixed by $G$ because
$[{\mathfrak g}\, ,{\mathfrak g}]$ is semisimple; since $K$ is Zariski
dense in $G$, this implies that
$$
[{\mathfrak g}\, ,{\mathfrak g}]^K\, =\, 0\, .
$$
In particular, $\widetilde{v}\,=\, 0$. Hence the decomposition in
\eqref{e1} is orthogonal with respect to $h_0$.

The natural projection
$$
f\, :\, [G\, ,G]\times Z_G\, \longrightarrow
\, G
$$
is a finite \'etale Galois covering. Since the decomposition in
\eqref{e1} is orthogonal, the $2$--form $f^*\omega_G$ decomposes as
\begin{equation}\label{e2}
f^*\omega_G\,=\, p^*_1 \omega_1 +p^*_2 \omega_2\, ,
\end{equation}
where $p_i$ is the projection of $[G\, ,G]\times Z_G$ to the
$i$--th factor, and $\omega_1$ (respectively, $\omega_2$) is the
$(1\, ,1)$--form on $[G\, ,G]$ (respectively, $Z_G$) associated
to the right translation invariant Hermitian metric obtained by
translating $h_0\vert_{[{\mathfrak g}\, ,{\mathfrak g}]}$
(respectively, $h_0\vert_{{\mathfrak z}_{\mathfrak g}}$). Form
\eqref{e2},
$$
f^*\omega^{\delta-1}_G\,=\, (p^*_1\omega^{\delta_1-1}_1)\wedge
p^*_2\omega^{\delta_2}_2 + (p^*_1\omega^{\delta_1}_1)\wedge
p^*_2\omega^{\delta_2-1}_2\, ,
$$
where $\delta_1$ and $\delta_2$ are the complex dimensions of
$[G\, ,G]$ and $Z_G$ respectively. Hence
\begin{equation}\label{e3}
f^*d \omega^{\delta-1}_G\,=\,
d f^*\omega^{\delta-1}_G\,=\, (p^*_1d\omega^{\delta_1-1}_1)\wedge
p^*_2\omega^{\delta_2}_2 + (p^*_1\omega^{\delta_1}_1)\wedge
p^*_2d \omega^{\delta_2-1}_2\, .
\end{equation}

Since $Z_G$ is abelian, it follows that $d \omega_2\,=\, 0$.
Therefore, from \eqref{e3} we conclude that $d\omega^{\delta-1}_G\, =\,
0$ if $d\omega^{\delta_1-1}_1\,=\, 0$. Therefore, it is enough
to prove the proposition for $G$ semisimple.

We assume that $G$ is semisimple.

Since the inner product $h_0$ is $K$--invariant, the Hermitian
structure $h_G$ is invariant under the left--translation action of
$K$ on $G$. Therefore, the element
\begin{equation}\label{e4}
(d\omega^{\delta-1}_G)(e) \,\in\, \wedge^{2\delta-1}
({\mathfrak g}\otimes_{\mathbb R}{\mathbb C})^*
\end{equation}
is preserved by the action of $K$ on $\wedge^{2\delta-1}
({\mathfrak g}\otimes_{\mathbb R}{\mathbb C})^*$
constructed using the adjoint action of $K$ on ${\mathfrak g}$.

The Killing form on $\mathfrak g$ produces a nondegenerate
symmetric bilinear form on ${\mathfrak g}\otimes_{\mathbb R}{\mathbb 
C}$. Using it, the $K$--module $\wedge^{2\delta-1}
({\mathfrak g}\otimes_{\mathbb R}{\mathbb C})^*$ gets identified
with ${\mathfrak g}\otimes_{\mathbb R}{\mathbb C}$. There is
no nonzero element of ${\mathfrak g}\otimes_{\mathbb R}{\mathbb C}$
which is fixed by $K$ because $G$ is semisimple and $K$ is Zariski
dense in $G$. In particular, the $K$--invariant element
$(d\omega^{\delta-1}_G)(e)$ in \eqref{e4} vanishes.
Since $d\omega^{\delta-1}$ is invariant under the right--translation
action of $G$ on itself, and $(d\omega^{\delta-1}_G)(e)\,=\, 0$,
we conclude that $d\omega^{\delta-1}_G\,=\, 0$.
\end{proof}

Let
$$
\Gamma\, \subset\, G
$$
be a closed discrete subgroup such that the quotient 
manifold $G/\Gamma$
is compact. The right--translation invariant Hermitian structure $h_G$
on $G$ descends to a Hermitian structure on $G/\Gamma$. This
Hermitian structure on $G/\Gamma$ will be denoted by $\widetilde{h}$.
Let $\widetilde{\omega}$ be the $(1\, ,1)$--form on $G/\Gamma$
defined by $\widetilde{h}$; so $\widetilde{\omega}$ pulls back to
the form $\omega_G$ on $G$. From Proposition \ref{prop1} we know that
\begin{equation}\label{e5}
d\widetilde{\omega}^{\delta-1}\,=\, 0\, .
\end{equation}

For a coherent analytic sheaf $E$ on $G/\Gamma$, define the
\textit{degree} of $E$
$$
\text{degree}(E)\, :=\, \int_{G/\Gamma} c_1(\det (E))\wedge
\widetilde{\omega}^{\delta-1}\, ;
$$
{}from \eqref{e5} it follows immediately that $\text{degree}(E)$
is independent of the choice of the first Chern form for
the (holomorphic) determinant line bundle
$\det (E)$; see \cite[Ch.~V, \S~6]{Ko} for determinant bundle.

For any $g\, \in\, G$, let
\begin{equation}\label{bg}
\beta_g\, :\, M\, :=\, G/\Gamma \, \longrightarrow\, G/\Gamma
\end{equation}
be the left--translation automorphism defined by $x\, \longmapsto\,
gx$. A coherent analytic sheaf $E$ over $G/\Gamma$
is called \textit{invariant} if for each $g\, \in\, G$, the pulled
back coherent analytic sheaf $\beta^*_g E$ is
isomorphic to $E$. Note that an invariant coherent analytic sheaf
is locally free.

\begin{theorem}\label{thm1}
Let $E$ be an invariant holomorphic vector bundle on $G/\Gamma$.
Then $${\rm degree}(E)\,=\, 0\, .$$
\end{theorem}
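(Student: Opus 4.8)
The plan is to exploit the fact that $\det(E)$ is a holomorphic line bundle whose isomorphism class is fixed by every left translation $\beta_g$, together with the connectedness of $G$, to show that $c_1(\det E)$ is represented by a form whose integral against $\widetilde{\omega}^{\delta-1}$ vanishes. First I would reduce to line bundles: since $\mathrm{degree}(E) = \mathrm{degree}(\det E)$ by definition, and since $\beta_g^* E \cong E$ implies $\beta_g^*(\det E) \cong \det E$, it suffices to prove that an invariant holomorphic line bundle $L$ on $M = G/\Gamma$ has degree zero.

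Next I would use the $G$-action on the Picard group. The group $G$ acts on $\mathrm{Pic}(M)$ via $g \mapsto \beta_g^*$, and invariance of $L$ says exactly that $L$ is a fixed point of this action. The connected component $\mathrm{Pic}^0(M)$ (line bundles with $c_1 = 0$ in $H^2(M;\mathbb{R})$, equivalently the image of $H^1(M,\mathcal{O}_M)$) is a complex torus, and $G$ being connected acts trivially on the discrete quotient $\mathrm{NS}(M) = \mathrm{Pic}(M)/\mathrm{Pic}^0(M)$... but this only forces the \emph{Chern class} $c_1(L) \in H^2(M;\mathbb{Z})$ to be $G$-invariant, not zero. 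So the real content is: show that a $G$-invariant class in $H^2(M;\mathbb{R})$ pairs to zero with $[\widetilde{\omega}^{\delta-1}]$. For this I would pass to de Rham cohomology and use averaging: represent $c_1(L)$ by a smooth closed $(1,1)$-form $\eta$; then for each $g$, $\beta_g^*\eta$ is cohomologous to $\eta$, and since $G$ is connected one can average $\eta$ over $K$ (or argue via the connectedness that the cohomology class is already represented by a $G$-invariant form — the pullback $\pi^*\eta$ to $G$ can be averaged to a bi-invariant form, which then descends). A $G$-invariant $2$-form on $G/\Gamma$ corresponds to an $\mathrm{Ad}$-invariant element of $\wedge^2 \mathfrak{g}^*$; the key point, exactly as in Proposition \ref{prop1}, is that $\int_M (\text{$G$-invariant $(1,1)$-form}) \wedge \widetilde{\omega}^{\delta-1}$ can be computed pointwise on $\mathfrak{g}$ using the Killing form, and invariance plus semisimplicity kills it. More concretely: the $G$-invariant form $\widetilde{\omega}^{\delta-1}$ wedged with a closed $G$-invariant $(1,1)$-form gives a $G$-invariant top form, so it is a constant multiple of the volume form; I must show the constant is zero, which follows because the $(1,1)$-form, being closed and $G$-invariant, actually represents zero in $H^2$ when $G$ is semisimple (by the same $[\mathfrak{g},\mathfrak{g}]^K = 0$ argument), so by Stokes its pairing with the closed form $\widetilde{\omega}^{\delta-1}$ vanishes; the central part contributes via $d\omega_2 = 0$ and a separate Stokes argument.

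The cleanest route, and the one I would ultimately write, avoids the cohomological subtleties: pull everything back to $G$. Let $\pi : G \to M$ be the projection. Choose a Hermitian metric on $L$ with Chern form $\eta$; averaging $\pi^*\eta$ over the left $K$-action (which is compact) and noting that $\pi^*\eta$ descends, one produces a cohomologous form that is left-$K$-invariant, hence (by Zariski density of $K$ in $G$ and the fact that the relevant data is algebraic/holomorphic) corresponds to a left-$G$-invariant form on $M$. Then invoke the pointwise linear-algebra computation: evaluating $d(\text{primitive})$ is unnecessary because a left-$G$-invariant closed $2$-form, as an $\mathrm{Ad}(K)$-invariant element of $\wedge^2(\mathfrak{g}\otimes\mathbb{C})^*$, decomposes according to \eqref{e1} and on the semisimple part must be exact as an invariant form — and an invariant exact form on the compact manifold $M$ integrates to zero against the closed form $\widetilde{\omega}^{\delta-1}$ by Stokes, while on the central torus part the wedge with $\widetilde{\omega}^{\delta-1}$ already involves $d\omega_2$-type terms handled as in Proposition \ref{prop1}.

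The main obstacle I anticipate is the step "invariant line bundle $\Rightarrow$ $c_1$ representable by a $G$-invariant form": connectedness of $G$ gives that $\beta_g^*$ acts trivially on $H^2(M;\mathbb{R})$ (since $G$ is connected and the action is continuous into the discrete automorphism group of cohomology — wait, it need not be discrete, so one genuinely needs that a connected group acting on cohomology by the identity on the identity component, which here is automatic), but turning a cohomology-level statement into a form-level statement requires an averaging argument that is only available over the compact $K$, and then one must promote $K$-invariance to $G$-invariance using Zariski density — this is exactly the mechanism used twice in Proposition \ref{prop1}, so the paper's proof will almost certainly run the same way, reducing the degree integral to a $K$-invariant (hence, by density, $G$-invariant, hence by semisimplicity vanishing) element of an appropriate exterior power of $\mathfrak{g}^*$.
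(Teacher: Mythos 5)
Your proposal does not follow the paper's route, and as it stands it has two genuine gaps, the second of which is fatal to the whole strategy. For comparison, the paper's proof is much more direct and uses none of the cohomological machinery you set up: since $E$ is invariant, it admits a holomorphic connection by \cite[Theorem 3.1]{Bi}; this induces a holomorphic connection $D$ on $\det(E)$; the curvature of the connection $D+\overline{\partial}_{\det(E)}$ equals $D^2$, hence is a form of type $(2,0)$; wedging a $(2,0)$--form with $\widetilde{\omega}^{\delta-1}$, which is of type $(\delta-1,\delta-1)$, gives zero pointwise, so the degree integral vanishes. No invariant representative of $c_1$ is ever needed. Your first gap is the promotion of $K$--invariance to $G$--invariance for a differential form on $M=G/\Gamma$: Zariski density of $K$ in $G$ upgrades $K$--fixed vectors to $G$--fixed vectors only in algebraic finite--dimensional representations, such as the fibres $\wedge^k({\mathfrak g}\otimes_{\mathbb R}{\mathbb C})^*$ used twice in Proposition \ref{prop1}; it does not apply to the action of $G$ by $\beta_g^*$ on smooth forms on $M$. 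Indeed, averaging a generic function over $K$ produces a $K$--invariant function that is not $G$--invariant (the $K$--orbits on $M$ are not the $G$--orbits), and the same failure occurs for $2$--forms.

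The second, decisive gap is your reduction to the statement ``a $G$--invariant class in $H^2(M;{\mathbb R})$ pairs to zero with $[\widetilde{\omega}^{\delta-1}]$.'' Since each $\beta_g$ is homotopic to the identity, \emph{every} class in $H^2(M;{\mathbb R})$ is $G$--invariant, so this statement would assert that every line bundle on $M$ has degree zero; that is false in general. Take $G=({\mathbb C}^*)^n$, which is allowed since $G$ is only assumed reductive: then $M$ is a compact complex torus, $\widetilde{\omega}$ is a (flat) K\"ahler form, and an ample line bundle has strictly positive degree. Concretely, a translation--invariant closed $(1,1)$--form representing an ample class, wedged with $\widetilde{\omega}^{\delta-1}$, is a positive constant times the volume form, so there is no ``separate Stokes argument'' on the central part that can kill it --- this is precisely where your sketch breaks down. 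What rescues the theorem in the abelian direction is the holomorphic invariance of the bundle itself (for a torus, $\beta_g^*L\cong L$ for all $g$ forces $c_1(L)=0$, e.g. by the theorem of the square), an input your argument never uses beyond the automatic, hence empty, invariance of the Chern class. The paper's holomorphic--connection argument supplies exactly this input, uniformly for all reductive $G$, which is why it succeeds where a purely cohomological averaging argument cannot.
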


\begin{proof}
Since $E$ is invariant, it admits a holomorphic connection
\cite[Theorem 3.1]{Bi}. Any holomorphic connection on $E$
induces a holomorphic connection on the determinant line bundle
bundle $\det (E)\, :=\, \bigwedge^r E$, where $r$ is the
rank of $E$.

Let
$$
D\, :\, \det (E)\, \longrightarrow\, \det (E)\otimes
\Omega^{1,0}_{G/\Gamma}
$$
be a holomorphic connection on $\det (E)$; see \cite{At}
for the definition of a holomorphic connection. Let
$$
\overline{\partial}_{\det (E)}\, :\, \det (E)\, \longrightarrow\,
\det (E) \otimes\Omega^{0,1}_{G/\Gamma}
$$
be the Dolbeault operator defining the holomorphic structure on
$\det (E)$. Then $D+\overline{\partial}_{\det (E)}$ is a connection
on $\det 
(E)$. Let ${\mathcal K}(D+\overline{\partial}_{\det (E)})$ be the 
curvature of the connection $D+\overline{\partial}_{\det (E)}$. We
note that
$$
{\mathcal K}(D+\overline{\partial}_{\det (E)})\,=\, (D+
\overline{\partial}_{\det (E)})^2 \,=\, D^2\, ,
$$
because the differential operator $D$ is holomorphic, and 
$\overline{\partial}_{\det (E)}$ is integrable, meaning
$\overline{\partial}^2_{\det (E)}\,=\,  0$.
Therefore, ${\mathcal K}(D+\overline{\partial}_E)$ is a
differential form of $G/\Gamma$ of type $(2\, ,0)$. (In fact, the
form $D^2$, which is called the curvature of the holomorphic
connection $D$, is holomorphic, but we do not need it.)

As ${\mathcal K}(D+\overline{\partial}_E)$ is of
type $(2\, ,0)$, and $\widetilde{\omega}$ is of type $(1\, ,1)$,
$$
{\rm degree}(\det (E))\,=\, \int_{G/\Gamma}
{\mathcal K}(D+\overline{\partial}_E)\wedge 
\widetilde{\omega}^{\delta-1}\,=\, 0\, .
$$
Since $c_1(E)\,=\, c_1(\det (E))$, the theorem follows.
\end{proof}

A vector bundle $E$ over $G/\Gamma$ is called \textit{semistable}
if
$$
\frac{\text{degree}(V)}{\text{rank}(V)}\, \leq\,
\frac{\text{degree}(E)}{\text{rank}(E)}
$$
for every coherent analytic subsheaf $V\, \subset\, E$ of
positive rank.

\begin{lemma}\label{lem0}
Let $E$ be a torsionfree coherent analytic sheaf on $G/\Gamma$.
For any $g\, \in\, G$, 
$$
{\rm degree}(E)\, =\, {\rm degree}(\beta^*_g E)\, ,
$$
where $\beta_g$ is the map in \eqref{bg}.
\end{lemma}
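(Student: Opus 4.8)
The plan is to reduce the statement to two facts: that the determinant line bundle commutes with pullback, so that $\det(\beta^*_g E)\,\cong\,\beta^*_g\det(E)$, and that the biholomorphism $\beta_g$ of $M\,=\,G/\Gamma$ is homotopic to the identity map because $G$ is connected. One cannot argue by a direct change of variables in the integrand defining the degree, because $\widetilde\omega$ is \emph{not} invariant under $\beta_g$ --- the metric $h_G$ is only right--translation invariant and left--$K$--invariant, not left--$G$--invariant --- and the role of \eqref{e5} is precisely to absorb this discrepancy.

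First I would fix a first Chern form $c$ of $\det(E)$, that is, a closed real $(1\, ,1)$--form on $M$ representing $c_1(\det(E))$ in de Rham cohomology. Since $\beta_g$ is holomorphic, pulling back a Hermitian metric on $\det(E)$ and taking its Chern form shows that $\beta^*_g c$ is a first Chern form of $\beta^*_g\det(E)\,\cong\,\det(\beta^*_g E)$. Because $\mathrm{degree}$ does not depend on the choice of first Chern form, by \eqref{e5}, this gives
$$
\mathrm{degree}(\beta^*_g E)\,=\,\int_{M}\beta^*_g c\wedge\widetilde\omega^{\delta-1}\, .
$$

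Next, since $G$ is connected, I would choose a smooth path $t\,\mapsto\, g_t$ in $G$ with $g_0\,=\,e$ and $g_1\,=\,g$; then $(x\, ,t)\,\mapsto\,\beta_{g_t}(x)$ is a homotopy from $\mathrm{id}_M\,=\,\beta_e$ to $\beta_g$, so $\beta^*_g$ acts as the identity on $H^2_{\mathrm{dR}}(M;\mathbb R)$. Hence $\beta^*_g c\,-\,c\,=\,d\eta$ for some smooth $1$--form $\eta$ on $M$, and therefore
$$
\int_M\beta^*_g c\wedge\widetilde\omega^{\delta-1}\,=\,\int_M c\wedge\widetilde\omega^{\delta-1}\,+\,\int_M d\eta\wedge\widetilde\omega^{\delta-1}\, .
$$
By \eqref{e5} we have $d\widetilde\omega^{\delta-1}\,=\,0$, so $d(\eta\wedge\widetilde\omega^{\delta-1})\,=\,d\eta\wedge\widetilde\omega^{\delta-1}$, and Stokes' theorem on the closed manifold $M$ makes the last integral vanish. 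Combining, $\mathrm{degree}(\beta^*_g E)\,=\,\int_M c\wedge\widetilde\omega^{\delta-1}\,=\,\mathrm{degree}(E)$.

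I do not expect a genuine obstacle; the only point requiring care is the one flagged above, namely that $\widetilde\omega$ is not $\beta_g$--invariant, which is why the argument must pass through de Rham cohomology and Stokes' theorem rather than a naive pullback of $\widetilde\omega^{\delta-1}$. Equivalently, one could change variables --- $\beta_g$ being an orientation--preserving diffeomorphism with inverse $\beta_{g^{-1}}$ --- to write $\int_M\beta^*_g c\wedge\widetilde\omega^{\delta-1}\,=\,\int_M c\wedge(\beta_{g^{-1}})^*\widetilde\omega^{\delta-1}$, and then use that $(\beta_{g^{-1}})^*\widetilde\omega^{\delta-1}$ is cohomologous to $\widetilde\omega^{\delta-1}$ by connectedness of $G$ while $c$ is closed; this is the same computation.
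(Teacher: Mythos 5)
Your argument is correct and is essentially the paper's own proof: connectedness of $G$ makes $\beta_g$ homotopic to the identity, so the first Chern forms of $\det(\beta^*_g E)$ and $\det(E)$ differ by an exact form, and the resulting term $\int_M d\eta\wedge\widetilde\omega^{\delta-1}$ vanishes by Stokes together with $d\widetilde\omega^{\delta-1}\,=\,0$ from Proposition \ref{prop1}. Your additional remarks (compatibility of $\det$ with pullback, non-invariance of $\widetilde\omega$ under $\beta_g$) only make explicit points the paper leaves implicit.
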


\begin{proof}
Since $G$ is connected, the map $\beta_g$ is homotopic to the identity
map of $G/\Gamma$. Hence
$$
c_1(\det (\beta^*_g E)) - c_1(\det (E)) \,=\, d\alpha\, ,
$$
where $\alpha$ is a smooth $1$--form on $G/\Gamma$, and $c_1
(\det (\beta^*_g E))$ and $c_1(\det (E))$ are first Chern forms. Now,
$$
{\rm degree}(\beta^*_g E)-{\rm degree}(E)\,=\,
\int_{G/\Gamma} (c_1(\det (\beta^*_g E))-c_1(\det (E)))\wedge
\widetilde{\omega}^{\delta-1}
$$
$$
=\, \int_{G/\Gamma}
(d\alpha)\wedge \widetilde{\omega}^{\delta-1}\,=\,
\int_{G/\Gamma}\alpha\wedge d\widetilde{\omega}^{\delta-1}\, =\, 0
$$
by Proposition \ref{prop1}.
\end{proof}

\begin{lemma}\label{lem1}
Let $E$ be an invariant holomorphic vector bundle on
$G/\Gamma$. Then $E$ is semistable.
\end{lemma}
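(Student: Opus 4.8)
The plan is to argue by contradiction, playing the uniqueness of the maximal destabilizing subsheaf off against Theorem \ref{thm1}. Suppose $E$ is not semistable. Since $E$ is invariant, Theorem \ref{thm1} gives $\mathrm{degree}(E)\,=\,0$, so the failure of semistability means that some coherent analytic subsheaf of $E$ of positive rank has strictly positive degree; hence the number $\mu_{\max}(E)\,:=\,\sup\{\mathrm{degree}(V)/\mathrm{rank}(V)\}$, the supremum being over the nonzero coherent subsheaves $V\,\subset\, E$, is strictly positive.

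First I would invoke the Harder--Narasimhan theory attached to the Gauduchon form $\widetilde{\omega}$ --- recall that $d\widetilde{\omega}^{\delta-1}\,=\,0$ by \eqref{e5}, which is precisely what makes $\mathrm{degree}$ well defined --- to produce the maximal destabilizing subsheaf: the degrees of coherent subsheaves of $E$ of a fixed rank are bounded above, the supremum $\mu_{\max}(E)$ is attained, and among the coherent subsheaves of $E$ of slope $\mu_{\max}(E)$ there is a unique one of maximal rank, say $F\,\subset\, E$, with $E/F$ torsionfree. In particular $F$ is canonically associated to $E$, and $\mathrm{degree}(F)\,=\,\mathrm{rank}(F)\cdot\mu_{\max}(E)\,>\,0$.

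The key step is then to transport $F$ under the symmetries $\beta_g$. Fix $g\,\in\, G$ and a holomorphic isomorphism $\phi_g\,:\,\beta^*_g E\,\longrightarrow\, E$, which exists because $E$ is invariant. Since $\beta_g$ is a biholomorphism, pullback along $\beta_g$ is an equivalence between coherent subsheaves of $E$ and coherent subsheaves of $\beta^*_g E$ preserving ranks, and by Lemma \ref{lem0} it preserves degrees; hence it preserves slopes and carries the maximal destabilizing subsheaf $F$ of $E$ to the maximal destabilizing subsheaf $\beta^*_g F$ of $\beta^*_g E$. Transporting further by $\phi_g$ and using uniqueness of the maximal destabilizing subsheaf of $E$, one gets $\phi_g(\beta^*_g F)\,=\, F$; in particular $\beta^*_g F\,\cong\, F$ for every $g\,\in\, G$. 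Thus $F$ is an invariant coherent analytic sheaf, hence locally free (as noted after \eqref{bg}), i.e.\ an invariant holomorphic vector bundle, so Theorem \ref{thm1} applied to $F$ yields $\mathrm{degree}(F)\,=\,0$, contradicting $\mathrm{degree}(F)\,>\,0$. Therefore $E$ is semistable.

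The one nonformal ingredient is the Harder--Narasimhan machinery in the non--K\"ahler (Gauduchon) setting: that $\mathrm{degree}$ is bounded above on subsheaves of fixed rank and that the maximal destabilizing subsheaf exists and is unique. This is classical over K\"ahler manifolds and is known to carry over to Gauduchon metrics; granting it, everything else is formal, using only Lemma \ref{lem0} and Theorem \ref{thm1}. I also expect the same argument to give the principal--bundle version mentioned in the introduction, with the canonical Harder--Narasimhan reduction of the structure group playing the role of $F$: it is preserved by every $\beta_g$, hence invariant, and one concludes via the principal--bundle analogue of Theorem \ref{thm1}.
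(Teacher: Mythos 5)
Your argument is correct and is essentially the paper's proof: the maximal destabilizing subsheaf you call $F$ is exactly the first step $V_1$ of the Harder--Narasimhan filtration (which the paper takes from Bruasse's theorem in the Gauduchon setting), and in both arguments its uniqueness together with Lemma \ref{lem0} shows it is invariant, hence of degree zero by Theorem \ref{thm1}, contradicting the destabilizing inequality. The only cosmetic difference is that you phrase it as a contradiction with $\mu_{\max}(E)>0$ while the paper concludes $V_1=E$ directly; do cite \cite{Br} for the existence and uniqueness of the maximal destabilizing subsheaf on non-K\"ahler manifolds rather than leaving it as ``known to carry over.''
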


\begin{proof}
Let
$$
0\, =\, V_0\, \subset\, V_1\, \subset\, \cdots\, \subset\,
V_{\ell-1} \, \subset\, V_\ell \,=\, E
$$
be the unique Harder--Narasimhan filtration for $E$ \cite[p. 590,
Theorem 3.2]{Br}. We recall that
\begin{equation}\label{e6}
\frac{\text{degree}(V_1)}{\text{rank}(V_1)}\, >\,
\frac{\text{degree}(E)}{\text{rank}(E)}
\end{equation}
if $V_1\, \not=\, E$. From the uniqueness of $V_1$ and Lemma
\ref{lem0} it follows that for any $g\,\in\, G$ and
any isomorphism of $E$ with $\beta^*_g E$, the image of the
composition
$$
\beta^*_g V_1\, \hookrightarrow\, \beta^*_g E\,
\stackrel{\sim}{\longrightarrow}\, E
$$
coincides with $V_1$. This implies that $V_1$ is invariant.
Therefore,
$$
\text{degree}(V)\,=\, 0\, =\, \text{degree}(E)
$$
by Theorem \ref{thm1}. Now from \eqref{e6} we conclude that
$V_1\,=\, E$. Hence $E$ is semistable.
\end{proof}

Let $H$ be any affine complex algebraic group. Let $E_H$
be an invariant holomorphic principal $H$--bundle over $G/P$,
which means that the principal $H$--bundle $\beta^*_gE_H$ is 
holomorphically isomorphic to $E_H$ for all $g\, \in\, G$. From
Lemma \ref{lem1} we know that the adjoint vector bundle
$\text{ad}(E_H)$ is semistable. If $H$ is reductive then the
semistability of $\text{ad}(E_H)$ implies that the principal 
$H$--bundle $E_H$ is semistable; see \cite{Ra} for the
definition of semistable principal bundles.

It is a natural question to ask whether Lemma \ref{lem1} remains
valid if the reductive group $G$ is replaced by some more general
affine algebraic groups $G_1$.
The first step would be to construct a suitable Hermitian structure
on the compact quotient $G_1/\Gamma$, where $\Gamma$ is a cocompact
lattice in $G_1$. In order to be able to define semistability, 
the Hermitian structure on $G_1/\Gamma$ should satisfy the Gauduchon 
condition. It may be noted that the Hermitian structure on
$G_1/\Gamma$ given by a right--translation invariant Hermitian
structure on $G_1$ satisfies the Gauduchon condition (see
\cite[p. 74]{Bi0}. For a
general compact complex manifold equipped with a Gauduchon
metric, the degree of a line bundle with holomorphic connection
need not be zero; but it remains valid if the base admits a
K\"ahler metric \cite[p. 196, Proposition 12]{At}. The
compact complex manifold $G_1/\Gamma$ admits a K\"ahler metric
if and only if $G_1$ is abelian.

%%%%%%%%%%%%%%%%%%%%%%%%%%%%%%%%%%%%%%%%%%%%%%%%%%%%%%%%%%%%%%%%%

\end{document}